\title{Littlewood--Paley--Rubio de Francia Inequality for the Two-parameter Walsh System}
\author{
Viacheslav Borovitskiy
\thanks{
This research was supported by the Ministry of Science and Higher Education of the Russian Federation (agreement No. 075-15-2019-1620), and by the Foundation for the Advancement of Theoretical Physics and Mathematics ``BASIS''.
\vspace{0.5\baselineskip}\newline
\emph{Keywords}: Littlewood-Paley inequality, Rubio de Francia inequality, Walsh system, Gundy's theorem, martingale, Hardy space, two-parameter, multi-parameter singular integral operator. 
}
\\
{\small St. Petersburg Department of Steklov Mathematical Institute}
\\
{\small St. Petersburg State University}
}
\date{\vspace{-5ex}}
\begin{document}

\maketitle

\begin{abstract}
    A version of Littlewood--Paley--Rubio de Francia inequality for the two-parameter Walsh system is proved: for any family of disjoint rectangles $I_k = I_k^1 \x I_k^2$ in ${\Z_+ \x \Z_+}$  and a family of functions $f_k$ with Walsh spectrum inside $I_k$ the following is true
    $$
    \norm{\sum\limits_k f_k}_{L^p} \leq C_p
    \norm{\del{\sum\limits_k \abs{f_k}^2}^{1/2}}_{L^p}
    , \qquad 1 < p \leq 2,
    $$
    where $C_p$ does not depend on the choice of rectangles $\cbr{I_k}$ or functions~$\cbr{f_k}$.
    The arguments are based on the atomic theory of two-parameter martingale Hardy spaces.
    In the course of the proof, a two-parameter version of Gundy's theorem on the boundedness of operators taking martingales to measurable functions is formulated, which might be of independent interest.
\end{abstract}

\section{Introduction}

Consider a countable index set $\mathbf{Z}$ and an orthonormal basis $\cbr{\phi_n}_{n \in \mathbf{Z}}$ in the space $L^2$.
Define operators $M_I$ for $I \subseteq \mathbf{Z}$ by the expression $M_I f = \sum_{n \in I} \innerprod{f}{\phi_n} \phi_n$.
Whenever $M_{I} f = f$, we say that the spectrum of $f$ lies in $I$ and write $\spec{f} \subseteq I$.

Consider also a partition $\cbr{I_k}_{k \in \N}$ of the index set $\mathbf{Z}$ and a family of functions~$f_k \in L^2$ such that $\spec{f_k} \subseteq I_k$.
Then the following equality holds
\[ \label{eqn:parseval_reformulation}
\norm{\sum\limits_{k = 1}^\infty f_k}_{L^2} = \norm{\del{\sum\limits_{k = 1}^\infty \abs{f_k}^2}^{1/2}}_{L^2}
.
\]
This follows directly from Parseval's identity, generalizing, in a sense, this classical result: if the $I_k$ are singletons, we recover precisely Parseval's identity.

Of course, if we replace both $L^2$ norms in equation \eqref{eqn:parseval_reformulation} by $L^p$ norms with some $p \not= 2$, the identity will not be valid.
In this case it is interesting to study a weaker kind of relationships between the left hand side and the right hand side of \eqref{eqn:parseval_reformulation}.
For instance, for some bases $\cbr{\phi_n}$ and specific partitions $\mathbf{Z} = \cup_{k \in \N} I_k$, this or that one-sided inequality with a multiplicative constant might be true.

The most famous assertion of this kind is the Littlewood--Paley inequality
\[ \label{eqn:littlewood-paley}
c_p \norm{\del{\sum\limits_{k = 1}^\infty \abs{f_k}^2}^{1/2}}_{L^p}
\leq
\norm{\sum\limits_{k = 1}^\infty f_k}_{L^p}
\leq
C_p \norm{\del{\sum\limits_{k = 1}^\infty \abs{f_k}^2}^{1/2}}_{L^p}
,~~1 < p < \infty,
\]
where $\phi_n(t) = e^{2 \pi i n t}$, $n \in \Z$ is a standard trigonometric system over the interval~$[0,1]$, $L^2 = L^2([0,1])$, and $I_k$ is a partition of the set $\Z$ of integers in a Hadamard lacunary sequence of intervals.
\footnote{The very same year when Littlewood and Paley introduced the pair of inequalities \eqref{eqn:littlewood-paley} for the trigonometric system (see \cite{littlewood1931}), the paper \cite{paley1931} of Paley appeared, proving the same pair of inequalities for the Walsh system that we will study in this paper.}

The corresponding statement for trigonometric system and partitions of $\Z$ into arbitrary intervals was established by Rubio de Francia \cite{rubiodefrancia1985} in 1985.
He showed that in this case the following pair of inequalities holds
\<
\label{eqn:rdf12}
\norm{\sum\limits_{k = 1}^\infty f_k}_{L^p}
&\leq
C_p
\norm{\del{\sum\limits_{k = 1}^\infty \abs{f_k}^2}^{1/2}}_{L^p}
, \qquad &1 < p \leq 2,
\\
\label{eqn:rdf2inf}
c_p
\norm{\del{\sum\limits_{k = 1}^\infty \abs{f_k}^2}^{1/2}}_{L^p}
&\leq
\norm{\sum\limits_{k = 1}^\infty f_k}_{L^p}
, \qquad &p \geq 2.
\>
These are called Littlewood--Paley--Rubio de Francia inequalities or simply Rubio de Francia inequalities.
Establishing these sparked a whole new line of research, yielding a number of extensions of this result published to date.

The majority of the extensions study the case of the trigonometric system.
Specifically, in the papers \cite{bourgain1985, kislyakov2005} inequality \eqref{eqn:rdf12} was generalized to arbitrary exponents $0 < p \leq 2$.
In the papers \cite{journe1985, soria1987, osipov2010a, osipov2010b} generalization for the $D$-parameter trigonometric system $\phi_n$, $n \in \Z^D$, and partitions of $\Z^D$ into arbitrary products of intervals was formulated, with inequality \eqref{eqn:rdf12} similarly extended to arbitrary exponents~$0 < p \leq 2$.
Rubio de Francia himself in the original paper \cite{rubiodefrancia1985} as well as other authors in \cite{kislyakov2008,borovitskiy2020} considered some weighted generalizations.
In the papers~\cite{osipov2012,malinnikova2019} some versions of these inequalities for the Morrey--Companato and Tribel--Lizorkin spaces were proved.
There is a review by Lacey \cite{lacey2003} that considers some of the mentioned, as well as some other extensions of Rubio de Francia inequalities.

Recently, Osipov \cite{osipov2017} proved a version of inequality \eqref{eqn:rdf12} where $\cbr{\phi_n}$, $n \in \Z_+$, is the Walsh system and the $I_k$ partition the positive integers $\Z_+$ into arbitrary pairwise nonintersecting intervals.
In this paper we take this line of research further by proving \eqref{eqn:rdf12} for the two-parameter Walsh system $\phi_n$, $n \in \Z_+ \x \Z_+$ and partitions of ${\Z_+ \x \Z_+}$ into arbitrary pairwise nonintersecting rectangles.
Formally, we prove the following statement.

\begin{restatable}{theorem}{walshrdftwodim}
\label{th:main_theorem}
Consider a family of pairwise nonintersecting rectangles $I_k= I_k^1 \x I_k^2$ inside ${\Z_+ \x \Z_+}$ and a family of functions $f_k$ with Walsh spectrum inside $I_k$, meaning that
\[
f_k(x_1, x_2) = \sum\limits_{(n_1, n_2) \in I_k} (f_k, w_{n_1} w_{n_2}) w_{n_1}(x_1) w_{n_2}(x_2),
\]
where $w_{n_i}$ are the standard Walsh functions in the Paley ordering.

\noindent If $1 < p \leq 2$, then
\[
\norm{\sum\limits_k f_k}_{L^p}
\leq
C_p
\norm{\del{\sum\limits_k \abs{f_k}^2}^{1/2}}_{L^p}
,
\]
where $C_p$ does not depend on the choice of rectangles $\cbr{I_k}$ or functions $\cbr{f_k}$.
\end{restatable}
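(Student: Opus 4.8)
The plan is to recast the inequality as the boundedness of a single linear operator and then interpolate between an $L^2$ endpoint, where everything is exact, and an $H^1$ endpoint, which is where Gundy's theorem does the real work. Concretely, setting $g = \sum_k f_k$ one has $M_{I_k} g = f_k$, because the rectangles are disjoint and $\spec{f_k} \subseteq I_k$, so it is equivalent to bound the operator $U \colon (h_k)_k \mapsto \sum_k h_k$, acting on sequences subject to the spectral constraint $\spec{h_k} \subseteq I_k$, as a map $L^p(\ell^2) \to L^p$ for $1 < p \le 2$; the right-hand side of the theorem is exactly $\norm{(h_k)_k}_{L^p(\ell^2)}$. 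At $p = 2$ this bound holds with constant $1$: functions with spectra in disjoint rectangles are orthogonal, so $\norm{\sum_k h_k}_{L^2}^2 = \sum_k \norm{h_k}_{L^2}^2$. The target is therefore the endpoint estimate $U \colon H^1(\ell^2) \to L^1$ for the two-parameter (product) dyadic martingale Hardy space of $\ell^2$-valued functions, after which complex (or real) interpolation between this endpoint and the $L^2$ bound yields $U \colon L^p(\ell^2) \to L^p$ for every $1 < p < 2$, with the case $p = 2$ already in hand.

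To bring martingale machinery to bear I would first translate the Walsh-spectral projections into martingale language. On $[0,1)$ the Paley-ordered Walsh functions satisfy $w_m w_n = w_{m \oplus n}$, so multiplication by $w_N$ translates the Walsh spectrum by the bitwise shift $N$, while the projection onto $[0, 2^m)$ is precisely the conditional expectation $E_m$ onto the $m$-th dyadic $\sigma$-algebra. Hence the projection $M_I$ onto an arbitrary integer interval $I$ can be realised, without logarithmic loss, as a fixed combination of Walsh modulations and dyadic conditional expectations, i.e. a modulated martingale truncation; on $[0,1)^2$ one simply tensors the two one-parameter constructions, so that $M_{I_k} = M_{I_k^1} \otimes M_{I_k^2}$ becomes a two-parameter modulated truncation adapted to the product dyadic filtration. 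With this dictionary $U$ is exhibited as an operator taking two-parameter martingales to measurable functions, which is exactly the setting of the Gundy-type theorem.

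The heart of the argument, and the step I expect to be the main obstacle, is the endpoint estimate via the two-parameter Gundy theorem. I would formulate this theorem so that $H^1(\ell^2) \to L^1$ boundedness (equivalently a suitable weak-type conclusion) follows from two hypotheses on $U$: the already-established $L^2(\ell^2) \to L^2$ bound, and a cancellation and localisation estimate describing how $U$ acts on the atoms of the two-parameter martingale Hardy space. Verifying the latter is delicate precisely because the theory is two-parameter: the atoms of product $H^1$ are not supported on rectangles but on arbitrary open sets of controlled shape, so the naive one-parameter atomic estimate fails and one must couple the modulated-truncation structure of $M_{I_k} = M_{I_k^1} \otimes M_{I_k^2}$ with a Journ\'e/Chang--Fefferman-type covering of the support set, keeping all constants uniform in the arbitrary rectangles $\cbr{I_k}$. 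The martingale-difference cancellation supplied by the conditional expectations is what produces the decay away from the support, while the modulations are harmless since they are isometries on every $L^q$; assembling these into the hypotheses of the two-parameter Gundy theorem is the technical crux.

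Granting the endpoint estimate, I would finish by interpolation: interpolating $U \colon H^1(\ell^2) \to L^1$ with $U \colon L^2(\ell^2) \to L^2$ gives $U \colon L^p(\ell^2) \to L^p$ for all $1 < p < 2$, which is the desired inequality on that range, and the borderline $p = 2$ has already been disposed of by orthogonality. Unwinding the reduction of the first paragraph then yields the stated bound $\norm{\sum_k f_k}_{L^p} \le C_p \norm{\del{\sum_k \abs{f_k}^2}^{1/2}}_{L^p}$ with a constant $C_p$ independent of the rectangles and of the functions, as claimed.
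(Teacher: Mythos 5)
Your overall skeleton (reduce to an operator bound, get $L^2$ from orthogonality, get an endpoint from a two-parameter Gundy theorem, interpolate) is the same as the paper's, but two steps that you treat as routine are precisely where the substance lies, and as stated they do not go through.

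First, the claim that $M_I$ for an arbitrary interval $I$ "can be realised, without logarithmic loss, as a fixed combination of Walsh modulations and dyadic conditional expectations" is false as stated. For $I = [a, b-1]$, xor-translation by a single number does not take $I$ to a dyadic interval; what is true (and this is Osipov's combinatorial lemma, which the paper applies in each variable separately) is that $I$ splits into roughly $\log_2 \abs{I}$ pieces $J_j$, $\tilde J_j$ with $a \dot{+} J_j$ and $b \dot{+} \tilde J_j$ dyadic. The number of pieces is unbounded, so the triangle inequality would cost a logarithm; the loss is avoided only because the shifted pieces land in pairwise disjoint dyadic blocks $\delta_k$ of strictly monotone sizes, which lets one feed all pieces simultaneously into a single vector-valued operator --- the paper's operator $G h = \sum_{(j,k)} w_{a_{j,k}} \Delta_k h_{j,k}$, whose $L^2$ bound comes exactly from that disjointness --- and then, at the very end, dominate the resulting square sums $\sum_j \abs{g^C_{k,j}}^2$ by $\del{S(w_{c_k} f_k)}^2$ and remove them using the boundedness of the two-parameter Littlewood--Paley square function $S$. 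None of this machinery (the partition into $A_j, B_j, C_j, D_j$ indexed by the four vertices of each rectangle, the operator $G$, the final square-function absorption) appears in your proposal; it is exactly what your phrase "fixed combination" hides.

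Second, the Gundy endpoint, which you yourself flag as the crux, is left unresolved and would fail for the operator you chose. Your $U \colon (h_k) \mapsto \sum_k h_k$ is defined only on the subspace of spectrally constrained sequences and carries no localisation structure: for a simple martingale $u$ with $\Delta_n u = \mathbf{1}_{e_n} \Delta_n u$ there is no reason whatsoever that $\sum_k u_k$ vanishes off $\bigcup_n e_n$, so hypothesis 2 of the Gundy theorem cannot be verified for $U$ (and interpolating an operator defined only on a constrained subspace raises its own problems). The paper's solution is to build the modulations and the martingale differences into the operator itself, so that $G u = \sum w_{a_{j,k}} \Delta_k u_{j,k}$ localises trivially: $\cbr{\abs{Gu} > 0} \subseteq \bigcup_k \cbr{\abs{\Delta_k u_{j,k}} > 0} \subseteq \bigcup_k e_k$. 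Relatedly, your concern about Chang--Fefferman open-set atoms is a red herring in this framework: the paper invokes Weisz's martingale analogue of R.~Fefferman's theorem, by which $L^2$ boundedness plus quasi-locality tested on \emph{rectangle} atoms already yields $H^p \to L^p$ for $p \leq 1$; that reduction is what makes the two-parameter endpoint tractable at all.
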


The proof is based upon a martingale version of the two-parameters singular integral theory of R. Fefferman and Journe~\cite{fefferman1986,journe1985}, formulated by Weisz~\cite{weisz1997}.

We use the theory of Weisz to prove Theorem~\ref{th:gundy2d} --- a two-parameter analog of Gundy's Theorem \cite{gundy1968} on the boundedness of operators taking martingales to measurable functions.
Theorem~\ref{th:gundy2d} helps proving the boundedness of operators that map two-parameter martingales to measurable functions in a rather general setting, and thus stands out as interesting on its own.
\footnote{To the best knowledge of the author, this assertion has not been explicitly formulated in the contemporary literature.}

By applying the combinatorial argument from Osipov's work on the one-parameter Walsh system \cite{osipov2017} independently for each variable, we essentially reduce the Rubio de Francia inequality for the two-parameter Walsh system to the question of boundedness for a certain operator, which in its turn we resolve by means of Theorem~\ref{th:gundy2d}.

\section{Preliminaries} \label{sec:preliminaries}

Here we present some preliminaries that will help us prove the main Theorem~\ref{th:main_theorem}.
First, we define two-parameter dyadic martingales and introduce the corresponding Hardy spaces.
Then we present some notions from the atomic theory of Hardy spaces that are useful for establishing boundedness of operators mapping martingales to measurable functions.
Finally, we recall the definition of the classical Walsh basis and define the two-parameter Walsh system.

Though we will need the theory of $l^2$-valued functions and martingales to prove the main theorem, to avoid cumbersome notation we study in this section only the scalar-valued case.
We do so because every definition, notion and assertion introduced here will be trivially transferable to the $l^2$-valued case.

\subsection{Two-parameter dyadic martingales} \label{sec:two_param_dyadic_martingales}

We define \emph{two-parameter dyadic filtration} to be the family $\cbr{\c{F}_{n_1, n_2}}_{n_1 \in \Z_+, n_2 \in \Z_+}$ of $\sigma$-algebras generated by the dyadic rectangles of size $2^{-n_1} \x 2^{-n_2}$, that is
\[
\c{F}_{n_1, n_2}
=
\sigma
\del{
    \cbr{
        \sbr{\frac{k_1}{2^{n_1}}, \frac{k_1 + 1}{2^{n_1}}}
        \times
        \sbr{\frac{k_2}{2^{n_2}}, \frac{k_2 + 1}{2^{n_2}}}
        :
        0 \leq k_i < 2^{n_i}
    }
}
,
\]
where $\sigma(\c{H})$ denotes the $\sigma$-algebra generated by the elements of the set $\c{H}$.
Define operator $\E_{n_1, n_2}$~to be the conditional expectation with respect to the $\sigma$-algebra~$\c{F}_{n_1, n_2}$.

Hereinafter we will often denote elements $(n_1, n_2) \in \Z_+^2$ by a single symbol~$n$. 
For $n, m \in \Z_+^2$ we write $n \leq m$ if and only if $n_1 \leq m_1$ and $n_2 \leq m_2$. With this, we introduce the following definition.

\begin{definition}
A family of integrable functions $u = \cbr{u_{n}}_{n \in \Z_+^2}$ is a \emph{two-parameter dyadic martingale} (from now on referred to as \emph{a martingale}) if the following conditions are fulfilled:
\1 for all $n \in \Z_+^2$ the function $u_n$ is $\c{F}_{n}$-measurable,
\2 we have $E_n u_m = u_n$ for all $n, m$ such that $n \leq m$.
\0
\end{definition}

We say that a martingale $u$ is in $L^p$ and write $u \in L^p$ for some $0 < p \leq \infty$ if $u_n \in L^p$ for all $n \in \Z_+^2$ and  $\norm{u}_{L^p} := \sup_{n \in \Z_+^2} \norm{u_n}_{L^p} < \infty$.
For two-parameter martingales, as in the classical one-parameter case, the following is true~\cite{weisz2006}: if~$u \in L^p$ for $1 < p < \infty$, then there exists a function $g \in L^p$ such that $u_n = \E_n g$ and
\[\lim\limits_{\min(n_1, n_2) \to \infty} \norm{u_n - g}_{L^p} = 0
,
\qquad
\qquad
\qquad
\norm{u}_{L^p} = \norm{g}_{L^p}
.
\]
Following the common practice, we will henceforth identify a martingale $u$ with the function $g$ and denote $g$ by the same symbol $u$.

Another important objects that we define are the \emph{martingale differences} $\Delta_n$:
\[
\Delta_{n_1, n_2} u
:=
u_{n_1, n_2} - u_{n_1 - 1, n_2}
- u_{n_1, n_2 - 1} + u_{n_1 - 1, n_2 - 1}
,
\]
where the formal symbols $u_{n_1, -1}$ and $u_{-1, n_2}$ are assumed to be equal to zero.

\subsection{Hardy spaces of two-parameter dyadic martingales}

We start with introducing a version of the Littlewood--Paley square function for two-parameter dyadic martingales.
\begin{definition}
    \emph{Littlewood--Paley square function} is denoted by $S$ and is given by
    \[
    S(u) := \del[3]{\sum\limits_{n \in \Z_+^2} \abs{\Delta_n u}^2}^{1/2}.
    \]
\end{definition}
The expression $\norm{S(u)}_{L^p}$ constitutes a norm that defines Hardy spaces.
\begin{definition}
    For $0 < p < \infty$ the \emph{martingale Hardy space} $H^p$ (from now on referred to as \emph{the Hardy space}) consists of martingales $u$ such that
    \[
    \norm{u}_{H^p} := \norm{S(u)}_{L^p} < \infty.
    \]
\end{definition}
It is known (cf. \cite{brossard1980,brossard1981,metraux1978}) that $\norm{S(u)}_{L^p} \sim \norm{u}_{L^p}$ for $1 < p < \infty$, meaning that for such exponents $p$ the spaces $L^p$ and $H^p$ coincide.
However for $p \leq 1$, the Hardy spaces constitute an independent and very useful entity.

We finish with formulating the following interpolation result for Hardy spaces.
\begin{theorem} \label{th:interpolation}
Consider a sublinear operator $V$ that is bounded between $H^{p_0}$ and $L^{p_0}$ and between $H^{p_1}$ and $L^{p_1}$.
Then $V$ is bounded between $H^p$ and $L^p$ for $p_0 < p < p_1$.
\end{theorem}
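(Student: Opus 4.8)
The plan is to realize both the Hardy scale and the Lebesgue scale as real interpolation scales and then appeal to the Marcinkiewicz-type interpolation theorem for sublinear operators whose target is an $L^p$-scale. Fix $0 < \theta < 1$ by $1/p = (1-\theta)/p_0 + \theta/p_1$, so that $p_0 < p < p_1$, and recall Peetre's $K$-functional $K(t, a; A_0, A_1) = \inf_{a = a_0 + a_1}\del{\norm{a_0}_{A_0} + t \norm{a_1}_{A_1}}$ together with the real interpolation space $(A_0, A_1)_{\theta, p}$, normed by $\norm{t^{-\theta} K(t, \cdot)}_{L^p((0,\infty), dt/t)}$.

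The Lebesgue identity $(L^{p_0}, L^{p_1})_{\theta, p} = L^p$ holds with equivalent quasi-norms for all $0 < p_0 < p_1 \le \infty$ and is classical. The task therefore reduces to the Hardy identity $(H^{p_0}, H^{p_1})_{\theta, p} = H^p$. To obtain it I would exhibit $H^p$ as a retract of $L^p(\ell^2)$. The square-function map $D : u \mapsto (\Delta_n u)_{n \in \Z_+^2}$ is, by the very definition of the Hardy norm, an isometry of $H^p$ into $L^p(\ell^2)$, since $\norm{Du}_{L^p(\ell^2)} = \norm{S(u)}_{L^p} = \norm{u}_{H^p}$. A reconstruction map $R$ in the reverse direction is furnished by telescoping, $R\del{(g_n)_n} = \sum_n g_n$, which recovers any martingale from its differences, so that $R \circ D = \mathrm{id}$. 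Granting that $D$ and $R$ are bounded on both endpoints, the fact that real interpolation commutes with retractions yields $\norm{u}_{(H^{p_0}, H^{p_1})_{\theta, p}} \sim \norm{Du}_{(L^{p_0}(\ell^2), L^{p_1}(\ell^2))_{\theta, p}} = \norm{Du}_{L^p(\ell^2)} = \norm{u}_{H^p}$, which is precisely the Hardy identity.

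With both identifications in hand, I would invoke interpolation for the sublinear $V$. Choosing for each $u$ a near-optimal decomposition $u = u_0 + u_1$ with $u_i \in H^{p_i}$, sublinearity gives the pointwise bound $\abs{Vu} \le \abs{Vu_0} + \abs{Vu_1}$; because the target is an $L^p$-scale, the lattice monotonicity of the $K$-functional of the couple $(L^{p_0}, L^{p_1})$ together with the endpoint bounds on $V$ turns this into the Marcinkiewicz estimate $K(t, Vu; L^{p_0}, L^{p_1}) \le C\, K(t, u; H^{p_0}, H^{p_1})$. Integrating against $t^{-\theta p}\, dt/t$ and using the two interpolation-space identities gives $\norm{Vu}_{L^p} \sim \norm{Vu}_{(L^{p_0}, L^{p_1})_{\theta, p}} \le C \norm{u}_{(H^{p_0}, H^{p_1})_{\theta, p}} \sim C \norm{u}_{H^p}$, which is the desired conclusion.

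The main obstacle I anticipate is the boundedness of the reconstruction map $R : L^{p_i}(\ell^2) \to H^{p_i}$ at the endpoints, especially when $p_i \le 1$, where $L^{p_i}$ and $H^{p_i}$ are only quasi-Banach and the usual duality and Khintchine arguments are unavailable. Concretely one must control the square function of $\sum_n g_n$ by the $\ell^2$-norm of $(g_n)$ uniformly in the sequence; in the two-parameter setting this is further complicated by the fact that $\Delta_n$ is a genuinely two-dimensional (telescoped) difference, so that the reconstruction must respect both martingale conditions simultaneously. Establishing this endpoint boundedness --- equivalently, the complementation of the martingale-difference subspace inside $L^{p_i}(\ell^2)$, for which the atomic theory of two-parameter $H^p$ is the natural tool --- is where the substance of the argument lies; once it is secured, the abstract interpolation machinery delivers the theorem mechanically.
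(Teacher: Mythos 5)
Your overall skeleton --- identify both scales as real interpolation scales, then run the $K$-functional/Marcinkiewicz argument for the sublinear $V$ --- is sound, and the final step (pointwise domination $\abs{Vu} \leq \abs{Vu_0} + \abs{Vu_1}$ passing to $K(t, Vu; L^{p_0}, L^{p_1}) \lesssim K(t, u; H^{p_0}, H^{p_1})$) works even in the quasi-Banach range. The paper itself offers no proof at all: it cites Weisz's Theorem A, where the Hardy-couple identity $(H^{p_0}, H^{p_1})_{\theta,p} = H^p$ is obtained via the atomic decomposition of two-parameter $H^p$. So everything hinges on whether your retraction argument actually delivers that identity, and it does not. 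First, your reconstruction map $R\del{(g_n)_n} = \sum_n g_n$ is unbounded already from $L^2(l^2)$ to $H^2$: take $g_n = h$ for $N$ values of $n$, with $h$ fixed and of vanishing mean; then $\norm{(g_n)}_{L^2(l^2)} = \sqrt{N}\,\norm{h}_{L^2}$ while $\norm{\sum_n g_n}_{H^2} \sim N \norm{h}_{L^2}$ --- note $\Delta_m\del{\sum_n g_n} = \sum_n \Delta_m g_n$, not $g_m$, so no cancellation occurs. The natural repair is $R'\del{(g_n)_n} = \sum_n \Delta_n g_n$, which does satisfy $R' \circ D = \mathrm{id}$ and is bounded on $L^p(l^2)$ for $1 < p < \infty$ by vector-valued martingale theory (dominate $\Delta_n$ by Doob maximal functions and use the Fefferman--Stein vector-valued maximal inequality).

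But at an endpoint $p_0 \leq 1$ --- the only case in which the theorem says more than ordinary $L^p$ interpolation, and exactly the case the paper uses (Gundy gives $H^p \to L^p$ for $p \leq 1$, interpolated against $L^2 \to L^2$) --- the boundedness of $R'$ fails, and not for a reason that more work can fix. For $p_0 = 1$, take (in the one-parameter model, say) $g_n = 2^N \1_{[0, 2^{-N}]}$ for $n = 1, \dots, N$: then $\norm{(g_n)}_{L^1(l^2)} = \sqrt{N}$ while $\abs{\Delta_n g_n} = 2^{n-1}\1_{[0,2^{-n+1}]}$ gives $\norm{(\Delta_n g_n)}_{L^1(l^2)} \gtrsim N$. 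For $p_0 < 1$ even a single coordinate fails: $\Delta_{0,0} g = \int g$ is unbounded on $L^{p_0}$ (a spike of height $w^{-1}$ on a set of measure $w$ has $\int g = 1$ but $\norm{g}_{L^{p_0}} = w^{1/p_0 - 1} \to 0$). In fact no bounded projection of $L^{p_0}(l^2)$ onto an isomorphic copy of $H^{p_0}$ can exist for $p_0 < 1$: $L^{p_0}(l^2)$ then has trivial dual, every complemented subspace of a space with trivial dual inherits a trivial dual (compose any functional with the projection), and martingale $H^{p_0}$ has a nontrivial, Lipschitz-type dual. So what you flagged as the ``main obstacle'' is not an obstacle to be overcome but an impossibility: $H^{p_0}$ is not a retract of $L^{p_0}(l^2)$ when $p_0 < 1$, and the Hardy interpolation identity must be proved by a different mechanism --- direct $K$-functional estimates via the atomic decomposition, which is precisely the content of the result of Weisz that the paper invokes.
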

\begin{proof}
Cf. \cite[Theorem A]{weisz1997}.
\end{proof}

\subsection{Boundedness of operators in~$H^p$}

R. Fefferman's theorem \cite{fefferman1986} is an extremely important tool for establishing the boundedness of operators on two-parameter Hardy classes in trigonometric harmonic analysis.
It allows one to check rather simple quasi locality conditions, similar to those often used in one-parameter case.
As it turns out, the situation in the two-parameter martingale case is similar.
The corresponding claim is based, as in the trigonometric case, on the atomic decomposition of Hardy space.
Here we will formulate this claim.
We start with two definitions.

First, we define the martingale counterpart of R. Fefferman's rectangle atoms.
\begin{definition} \label{dfn:rectangle_atom}
    We call a function $a \in L^2$ a \emph{martingale  $H^p$ rectangle atom} (from now on referred to as a \emph{rectangle atom}) if the following conditions are satisfied
    \1 $\supp a \subseteq F$, where $F \subseteq [0, 1)^2$ is some dyadic rectangle,
    \2 $\norm{a}_{L^2} \leq \abs{F}^{1/2-1/p}$,
    \3 for all $x, y \in [0,1)$ we have $\int\limits_0^1 a(u, y) du = \int\limits_0^1 a(x, u) du = 0$.
    \0
\end{definition}
In accordance with the convention mentioned in Subsection \ref{sec:two_param_dyadic_martingales}, we view rectangle atom as a function or a martingale depending on the context.

Second, we introduce a class of operators for which the aforementioned quasi locality condition is satisfied.
\begin{definition}
    An operator $V$ mapping martingales to measurable functions is said to be \emph{$H^p$ quasi local}, if there exists $\delta > 0$ such that for all $r \in \N$, for all dyadic rectangles $R \subseteq [0,1)^2$, and for all $H^p$ rectangle atoms supported on $R$ we have
    \[
    \int\limits_{[0,1)^2 \setminus R_r} \abs{V a}^p \leq C_p 2^{- \delta r},
    \]
    where $R_r$ is a dyadic rectangle such that $R \subseteq R_r$ and $\abs{R_r} = 2^{2 r} \abs{R}$, and $C_p$ is a constant depending only on $p$.
\end{definition}

Finally, we formulate the claim.
\begin{theorem} \label{th:atomic_bounded}
Consider a sublinear operator $V$ that is $H^p$ quasi local for some exponent $0< p \leq 1$.
If $V$ is bounded between $L^2$ and $L^2$, then
\[
\norm{V u}_{L^p} \leq C_p \norm{u}_{H^p}
\t{~for all~}
u \in H^p
.
\]
\end{theorem}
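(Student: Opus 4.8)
The plan is to follow R.~Fefferman's scheme in the martingale setting, reducing everything to the rectangle-atom estimate supplied by quasi locality. First I would invoke the atomic decomposition of the product martingale Hardy space (as developed by Weisz \cite{weisz1997}): every $u \in H^p$ can be written as $u = \sum_j \lambda_j a_j$, where each $a_j$ is an atom supported on an open set $\Omega_j \subseteq [0,1)^2$ with $\norm{a_j}_{L^2} \leq \abs{\Omega_j}^{1/2 - 1/p}$ and the two cancellation conditions, each $a_j$ further decomposes as $a_j = \sum_R a_j^R$ over the maximal dyadic subrectangles $R \subseteq \Omega_j$ (with $a_j^R$ supported on $R$ and having vanishing integrals in each variable), and $\sum_j \abs{\lambda_j}^p \leq C_p \norm{u}_{H^p}^p$. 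Since $V$ is sublinear and $0 < p \leq 1$, one gets $\abs{Vu}^p \leq \sum_j \abs{\lambda_j}^p \abs{V a_j}^p$ (the interchange of $V$ with the infinite sum being justified by the $L^2$ convergence of the decomposition together with the $L^2$ boundedness of $V$), whence $\norm{Vu}_{L^p}^p \leq \sum_j \abs{\lambda_j}^p \norm{V a_j}_{L^p}^p$. It therefore suffices to establish the uniform bound $\norm{Va}_{L^p}^p \leq C_p$ for a single atom $a$ supported on an open set $\Omega$, after which the theorem follows upon summing against $\sum_j \abs{\lambda_j}^p$.

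Fixing such an atom $a$, I would split $\norm{Va}_{L^p}^p = \int_{[0,1)^2} \abs{Va}^p$ over the enlargement $\tilde\Omega := \cbr{M_s \mathbf{1}_\Omega > 1/2}$ and its complement, where $M_s$ is the dyadic strong maximal function; the strong maximal inequality gives $\abs{\tilde\Omega} \leq C \abs{\Omega}$, the $L\log L$ factor being harmless because $\mathbf{1}_\Omega$ takes values in $\cbr{0,1}$. On $\tilde\Omega$ the estimate is immediate from H\"older's inequality with exponent $2/p > 1$ and the $L^2$ boundedness of $V$: one has $\int_{\tilde\Omega} \abs{Va}^p \leq \abs{\tilde\Omega}^{1 - p/2} \norm{Va}_{L^2}^p \leq C \abs{\Omega}^{1 - p/2} \norm{a}_{L^2}^p \leq C \abs{\Omega}^{1-p/2} \abs{\Omega}^{p/2 - 1} = C$, using the normalization of the atom.

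The remaining contribution $\int_{[0,1)^2 \setminus \tilde\Omega} \abs{Va}^p$ is the genuinely hard one. Here I would use the rectangle decomposition $a = \sum_R a^R$ together with sublinearity and $p \leq 1$ to bound it by $\sum_R \int_{[0,1)^2 \setminus \tilde\Omega} \abs{V a^R}^p$. Writing $a^R = \mu_R \hat a^R$ as a multiple of a normalized $H^p$ rectangle atom on $R$, so that $\mu_R = \norm{a^R}_{L^2} \abs{R}^{1/p - 1/2}$, I would for each $R$ take the largest integer $r = r(R)$ with $R_{r} \subseteq \tilde\Omega$; then $[0,1)^2 \setminus \tilde\Omega \subseteq [0,1)^2 \setminus R_{r(R)}$, and quasi locality applied to $\hat a^R$ yields $\int_{[0,1)^2 \setminus \tilde\Omega} \abs{V a^R}^p \leq C_p \mu_R^p 2^{-\delta r(R)}$. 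Everything then hinges on the geometric summation $\sum_R \mu_R^p 2^{-\delta r(R)} \leq C_p$, which after H\"older's inequality in the form $p < 2$ and the orthogonality identity $\sum_R \norm{a^R}_{L^2}^2 = \norm{a}_{L^2}^2 \leq \abs{\Omega}^{1 - 2/p}$ reduces to the purely combinatorial estimate $\sum_R \abs{R}\, 2^{-\delta' r(R)} \leq C \abs{\Omega}$ with $\delta' = 2\delta/(2-p)$. I expect this to be the main obstacle: the maximal dyadic rectangles $R \subseteq \Omega$ need not be disjoint in the product setting, so $\sum_R \abs{R}$ may vastly exceed $\abs{\Omega}$, and the exponential decay $2^{-\delta' r(R)}$ must be played off against the proliferation of deeply buried rectangles. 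I would settle it exactly as in the product theory, fixing one side of $R$ at a time and controlling the total length of the rectangles stacked over a fixed dyadic interval by the measure of $\Omega$ via the strong maximal function. Combining the near and far estimates gives $\norm{Va}_{L^p}^p \leq C_p$, which completes the proof.
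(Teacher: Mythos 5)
Your proposal cannot be compared line-by-line with ``the paper's proof'' because the paper does not prove Theorem \ref{th:atomic_bounded} at all: its proof is the citation to \cite[Theorem 2]{weisz1997}. What you have written is a reconstruction of that cited result, and it follows exactly the route of R.~Fefferman \cite{fefferman1986} as transferred to martingales by Weisz: atomic decomposition of the product Hardy space, the $p$-triangle inequality, the estimate near the support via $L^2$-boundedness and H\"older (your exponent bookkeeping there is correct), and the estimate far from the support via quasi locality plus a geometric summation. So in spirit you are on the same path as the source on which the paper relies.

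The genuine gap is the step you yourself flag as ``the main obstacle'' and then dispose of in one sentence. The estimate $\sum_R \abs{R}\, 2^{-\delta' r(R)} \leq C \abs{\Omega}$ over the maximal dyadic subrectangles $R$ of $\Omega$ is Journ\'e's covering lemma; it is not a routine consequence of the strong maximal inequality obtained by ``fixing one side of $R$ at a time,'' but the genuinely deep geometric fact on which the entire product theory rests, and it requires either a full proof in its dyadic form (appropriate for martingales) or an explicit citation (\cite{journe1985}; the martingale version is what \cite{weisz1997} supplies). Moreover, the form you need is not literally the standard one: Journ\'e's lemma bounds $\sum_R \abs{R}\, \gamma_2(R)^{-\delta}$ over rectangles maximal in the first direction, where $\gamma_2(R)$ is a one-sided stretch in the second direction, whereas your $r(R)$ is a symmetric two-sided enlargement. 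The reduction does work --- every rectangle maximal in both directions is in particular maximal in the first, and $2^{-\delta' r(R)} \approx \min\del{\gamma_1(R),\gamma_2(R)}^{-\delta'} \leq \gamma_2(R)^{-\delta'}$ --- but it has to be stated. Two smaller points: the interchange of $V$ with the infinite atomic sum cannot be justified by ``$L^2$ convergence of the decomposition,'' since for $p \leq 1$ a general $u \in H^p$ need not lie in $L^2$ and its atomic series need not converge there; the standard remedy is to prove the bound for $u \in L^2 \cap H^p$, where such convergence can be arranged, and note that this suffices for the applications. Finally, your H\"older step uses $\sum_R \norm{a^R}_{L^2}^2 \lesssim \abs{\Omega}^{1-2/p}$ with the pieces $a^R$ indexed by maximal dyadic subrectangles; this is indeed part of the structure of Weisz's atomic decomposition, but it must be quoted in that precise form rather than assumed.
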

\begin{proof}
Cf. \cite[Theorem 2]{weisz1997}.
\end{proof}

\subsection{Two-parameter Walsh system} \label{sec:Walsh2D}
We conclude the preliminaries with defining the two-parameter Walsh system.
We start with recalling the definition of the classical one-parameter Walsh system.
\begin{definition}
    \emph{The Walsh system} $\cbr{w_n}_{n \in \Z_+}$ is a family of piecewise constant functions of one real variable defined as follows.
    First, put $w_0 = 0$.
    Then, if $n > 0$ and $n = 2^{k_1} + \dots + 2^{k_s}$, $k_1 > k_2 > \dots > k_s \geq 0$, put
    \[
    w_n(x) := \prod_{i=1}^s r_{k_i + 1}(x),
    \t{~~where~~} r_k(x) = \sgn \sin 2^k \pi x
    .
    \]
\end{definition}
Here $\cbr{r_k}_{k \in \Z_+}$ is the Rademacher system.
Different orderings of Walsh functions are considered throughout the literature.
The ordering that corresponds to the definition above is called \emph{the Paley ordering}.
Hereinafter we consider precisely this ordering.

The \emph{two-parameter Walsh system} is defined by the expression
\[
w_{n_1, n_2} (x_1, x_2)
=
w_{n_1}(x_1) w_{n_2}(x_2),
\qquad
n \in Z_+^2.
\]
It is an orthonormal basis in $L^2 \del{[0,1]^2}$.
Moreover, for any function $f$ we have
\<
\del{{\E}_{k_1, k_2} f} (x_1, x_2)
&=
\sum\limits_{n_1 = 0}^{2^{k_1} - 1}
    \sum\limits_{n_2 = 0}^{2^{k_2} - 1}
        \innerprod{f}{w_n}
        \ 
        w_n(x_1, x_2),
\\
\del{\Delta_{k_1, k_2} f} (x_1, x_2)
&=
\sum\limits_{n \in \delta_{k_1, k_2}}
        \innerprod{f}{w_n}
        \ 
        w_n(x_1, x_2),
\>
where $\innerprod{\cdot}{\cdot}$ is the inner product in $L^2 \del{[0,1]^2}$ and
\[
\begin{aligned} \label{eqn:delta_k}
\delta_{k_1, k_2}
&=
[2^{k_1-1}, 2^{k_1}-1]
\x
[2^{k_2-1}, 2^{k_2}-1]
, \qquad &k_1, k_2 > 0,
\\
\delta_{0, k_2}
&=
\cbr{0}
\x
[2^{k_2-1}, 2^{k_2}-1]
,  &k_2 > 0,
\\
\delta_{k_1, 0}
&=
[2^{k_1-1}, 2^{k_1}-1]
\x
\cbr{0}
, &k_1 > 0,
\\
\delta_{0, 0}
&=
\cbr{(0,0)}
.
\end{aligned}
\]

For a pair $w_n(\cdot), w_m(\cdot)$, $n, m \in \Z_+$ of two one-parameter Walsh functions we have $w_n (x) w_m(x) = w_{n \dot{+} m}(x)$, where $\dot{+}$ is the bitwise exclusive disjunction (xor) operation acting upon the binary representations of numbers $n$ and $m$:
\[
\del[2]{{\sum}_{k=0}^\infty \alpha_k 2^k}
\dot{+}
\del[2]{{\sum}_{k=0}^\infty \beta_k 2^k}
:=
{\sum}_{k=0}^\infty (\alpha_k + \beta_k \bmod 2) 2^k
.
\]
If we define the corresponding operation $\dot{+}$ acting on a pair of $n = (n_1, n_2)$ and $m = (m_1, m_2)$ by putting
\[
n \dot{+} m = (n_1 \dot{+} m_1, n_2 \dot{+} m_2)
,
\]
then obviously
\[
w_n (x_1, x_2) w_m(x_1, x_2) = w_{n \dot{+} m}(x_1, x_2)
.
\]

\section{Two-parameter Gundy theorem}

Theorem \ref{th:atomic_bounded} from the previous section enables us to formulate a new version of Gundy's theorem that he introduced in the papers \cite{gundy1968, gundy1980}.
Note that our formulation will be closer to the version formulated much later in the Kislyakov's paper \cite{kislyakov1987}.
This theorem, due to the simplicity of its conditions, can be notably useful in proving the boundedness of operators taking martingales into measurabe functions.
It is thus an interesting result on its own and the key to proving Theorem \ref{th:main_theorem}.

We with a definition.
A martingale $u$ is a \emph{simple martingale} if there exists some $m \in \Z_+^2$ such that $u_n = \E_m u_n$ for all indices $n \in \Z_+^2$.
With this, we may formulate the following version of Gundy's theorem.
\begin{theorem} \label{th:gundy2d}
Consider a sublinear operator $V$ mapping martingales to measurable functions.
Assume the following two conditions.
\1 The operator $V$ is bounded between $L^2$ and $L^2$.
\2 If $u$ is a simple martingale for which $u_{0, 0} = 0$ and
\[
\Delta_n u = \1_{e_n} \Delta_n u, ~~\t{where} e_n \in \c{F}_m \t{for some} m \leq n, m \not= n
,
\]
then $\cbr{\abs{V u} > 0} \subseteq \bigcup\limits_{n \in \Z_+^2 \setminus \cbr{0}} e_n$.
\0
Then $V$ is bounded between $H^p$ and $L^p$ for any $0 < p \leq 1$.
\end{theorem}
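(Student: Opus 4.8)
The plan is to deduce the theorem from Theorem~\ref{th:atomic_bounded}. Condition~1) already supplies the $L^2 \to L^2$ bound, so it suffices to fix an exponent $0 < p \leq 1$ and verify that $V$ is $H^p$ quasi local. I would establish this by proving the much stronger statement that $Va$ is supported on $R$ for every $H^p$ rectangle atom $a$ supported on a dyadic rectangle $R$. Once this is known, for every $r$ one has $R \subseteq R_r$, hence $[0,1)^2 \setminus R_r \subseteq [0,1)^2 \setminus R$, so the quasi locality integral $\int_{[0,1)^2 \setminus R_r} \abs{Va}^p$ vanishes identically and the quasi locality inequality holds with any $\delta > 0$.

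The first step is to read off the martingale structure of a rectangle atom. Write $R = R^1 \x R^2$ with $\abs{R^1} = 2^{-s_1}$ and $\abs{R^2} = 2^{-s_2}$. Using only the support condition $\supp a \subseteq R$ and the two cancellation conditions, I would check that $\E_{k_1, k_2} a$ vanishes whenever $k_1 \leq s_1$ or $k_2 \leq s_2$: any dyadic rectangle of the relevant size either misses $R$, or contains $R$ along the coordinate in which its side is long, and in the latter case the integral of $a$ over it is zero by the mean-zero property in that variable. Consequently $\Delta_n a = 0$ unless $n_1 > s_1$ and $n_2 > s_2$, and for such $n$ all four conditional expectations entering $\Delta_n a$ are averages on scales at least as fine as $R$, so $\Delta_n a$ is supported on $R$. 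Moreover $a_{0,0} = \E_{0,0} a = \int_{[0,1)^2} a = 0$. In particular every nonzero difference satisfies $\Delta_n a = \1_R \Delta_n a$ with $R \in \c{F}_{(s_1, s_2)}$ and $(s_1, s_2) \leq n$, $(s_1, s_2) \neq n$, which is exactly the structure required by condition~2).

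The obstacle is that condition~2) applies only to \emph{simple} martingales, whereas an atom carries martingale differences at arbitrarily high levels. I would resolve this by truncation: for $N \in \N$ set $a^{(N)} := \E_{N,N} a = \sum_{n \leq (N,N)} \Delta_n a$, a simple martingale with $a^{(N)}_{0,0} = 0$ whose differences still satisfy $\Delta_n a^{(N)} = \1_R \Delta_n a^{(N)}$ with $R \in \c{F}_{(s_1,s_2)}$. Applying condition~2) to $a^{(N)}$ with $e_n = R$ yields $\cbr{\abs{V a^{(N)}} > 0} \subseteq R$, i.e. $V a^{(N)}$ is supported on $R$ for every $N$.

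Finally I would pass to the limit $N \to \infty$. Since $a^{(N)} \to a$ in $L^2$, condition~1) together with sublinearity gives $\norm{\abs{Va} - \abs{V a^{(N)}}}_{L^2} \leq \norm{V(a - a^{(N)})}_{L^2} \leq C \norm{a - a^{(N)}}_{L^2} \to 0$, so along a subsequence $\abs{V a^{(N)}} \to \abs{Va}$ almost everywhere. As each $V a^{(N)}$ vanishes off $R$, so does $Va$. This yields the quasi locality of $V$, and Theorem~\ref{th:atomic_bounded} then furnishes the boundedness of $V$ between $H^p$ and $L^p$ for the chosen $p$; since $p \in (0,1]$ was arbitrary, the proof is complete. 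The only delicate point is this limiting argument, where the mere sublinearity of $V$ forces one to reason through the pointwise bound $\bigl|\abs{Va} - \abs{V a^{(N)}}\bigr| \leq \abs{V(a - a^{(N)})}$ instead of through exact linear convergence.
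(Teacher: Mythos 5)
Your proposal is correct and takes essentially the same route as the paper: both reduce to simple martingales by conditional-expectation truncation justified by the $L^2$ bound, observe that a rectangle atom supported on $R$ has $a_{0,0}=0$ and differences of the form $\Delta_n a = \1_R \Delta_n a$ with $R \in \c{F}_{(s_1,s_2)}$ (vanishing differences otherwise, where one takes $e_n = \emptyset$), apply condition 2 to get $\cbr{\abs{Va} > 0} \subseteq R$, and conclude quasi locality trivially so that Theorem~\ref{th:atomic_bounded} applies. The only difference is cosmetic --- the paper passes to the limit in the quasi-locality integral inequality (via $L^2$-to-$L^p$ convergence on a finite measure space), while you pass to the limit in the pointwise support statement via a.e.\ convergence along a subsequence, and your explicit treatment of sublinearity in that limiting step is, if anything, slightly more careful than the paper's.
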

\begin{proof}
Fix $0 < p \leq 1$.
We will show that $V$ is $H^p$ quasi local, then the claim will follow from Theorem~\ref{th:atomic_bounded}.

Take some $H^p$ rectangle atom $a$ supported on a dyadic rectangle $R \subseteq [0,1)^2$.
We need to check that for all $r \in \N$ and for some $\delta$ not depending on $a$ and $r$ we have:
\[ \label{eqn:gundy_to_verify}
\int\limits_{[0,1)^2 \setminus R^r} \abs{V a}^p (x_1, x_2) dx_1 dx_2 \leq C_p 2^{-\delta r}.
\]

We claim that condition \eqref{eqn:gundy_to_verify} can be checked only for atoms that are simple martingales.
Indeed, assume that it is true indeed for all rectangle atoms that are simple martingales, and let us prove that it is true for arbitrary rectangle atom~$a$.
It is easy to check that the simple martingale $a_n = \E_n a$ remains a rectangle atom.
From $\lim_{\min(n_1, n_2) \to \infty} \norm{a_n - a}_{L^2} = 0$ it follows that $\lim_{\min(n_1, n_2) \to \infty} \norm{V a_n - V a}_{L^2} = 0$, because $V$ is bounded between $L^2$ and $L^2$.
Hence $\lim_{\min(n_1, n_2) \to \infty} \norm{V a_n - V a}_{L^p} = 0$.
This justifies the passage to the limit in inequality \eqref{eqn:gundy_to_verify}, which proves inequality for the initial rectangle atom $a$.
Thus hereinafter in this proof we assume all rectangle atoms to be simple martingales.

Now we find an element $N \in \Z_+^2$ such that $R \in \c{F}_N$ and for any $n$ such that $R \in \c{F}_n$, we have $N \leq n$.
Since $\supp a \subseteq R$ and thanks to item 3 in the definition of a rectangle atom, we have
\[
\Delta_{n} a = \1_R \Delta_{n} a \qquad \t{for} n \geq N,\  n \not= N,
\]
\[
\Delta_{n} a = \1_{\emptyset} \Delta_{n} a \qquad \t{otherwise.}
\]
Moreover, $a_{0, 0} = 0$ due to item 3 in the definition of rectangle atom.
Using condition 2 of this theorem, we have $\cbr{\del{V a} > 0} \subseteq R$, hence
\[
\int\limits_{[0,1)^2 \setminus R^r} \abs{V a}^p (x_1, x_2) dx_1 dx_2
\leq
\int\limits_{[0,1)^2 \setminus R} \abs{V a}^p (x_1, x_2) dx_1 dx_2
=
0
.
\]
The right-hand side is trivially bounded by $C_p 2^{-\delta r}$ for any $\delta > 0$, which proves the claim.
\end{proof}

\begin{corollary}
If the conditions of Theorem {\normalfont \ref{th:gundy2d}} are fulfilled, then $V$ is bounded between $L^s$ and $L^s$ for $1 < s \leq 2$.
\end{corollary}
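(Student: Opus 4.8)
The plan is to obtain $L^s$-boundedness for $1 < s \leq 2$ by interpolating between the $H^1$--$L^1$ bound furnished by Theorem~\ref{th:gundy2d} and the $L^2$--$L^2$ bound assumed in its first condition, exploiting the fact recalled in the preliminaries that $H^p$ and $L^p$ coincide (with equivalent norms) for every $1 < p < \infty$.

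First I would record the two endpoints in the form required by the interpolation Theorem~\ref{th:interpolation}. On the one hand, Theorem~\ref{th:gundy2d} asserts that $V$ is bounded between $H^p$ and $L^p$ for every $0 < p \leq 1$; fixing $p_0 = 1$ gives boundedness between $H^{p_0}$ and $L^{p_0}$. On the other hand, condition~1 of Theorem~\ref{th:gundy2d} states that $V$ is bounded between $L^2$ and $L^2$; since $H^2 = L^2$, this is the same as boundedness between $H^2$ and $L^2$, so I may take $p_1 = 2$.

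Next I would apply Theorem~\ref{th:interpolation} with these $p_0 = 1$ and $p_1 = 2$ to conclude that $V$ is bounded between $H^s$ and $L^s$ for every $1 < s < 2$. Invoking once more the coincidence $H^s = L^s$ for $1 < s < 2$ turns this into the desired boundedness between $L^s$ and $L^s$ on the open interval. The right endpoint $s = 2$ is not covered by the strict interpolation interval, but it is precisely condition~1 of the theorem, so it comes for free and completes the range $1 < s \leq 2$.

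I do not expect a genuine obstacle here: all the analytic content sits in Theorem~\ref{th:gundy2d} and in the interpolation theorem, and the corollary is a routine assembly of these two results. The only points demanding any care are the identification $H^p = L^p$ for $1 < p < \infty$, which is quoted in the preliminaries, and the bookkeeping at the endpoint $s = 2$, where one must appeal directly to the hypothesis rather than to interpolation.
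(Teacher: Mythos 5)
Your proposal is correct and matches the paper's own argument: the paper likewise interpolates via Theorem~\ref{th:interpolation} between the $H^p \to L^p$ bound ($p \leq 1$) from Theorem~\ref{th:gundy2d} and the assumed $L^2 \to L^2$ bound, using the coincidence $H^s = L^s$ for $1 < s < \infty$. Your explicit handling of the endpoint $s = 2$ and of the identification $H^s = L^s$ only spells out details the paper leaves implicit.
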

\begin{proof}
Interpolation between the boundedness of $V: H^p \to L^p$ for some $p \leq 1$ and $V: L^2 \to L^2$ by means of Theorem \ref{th:interpolation} gives the result.
\end{proof}

\section{Auxiliary operator $G$}

In this section we introduce the auxiliary operator $G$, the two-parameter counterpart of the auxiliary operator $G$ introduced by Osipov in \cite{osipov2017}, and prove its boundedness using the results of the previous section.
It is this particular operator that will appear in the course of the proof of the main theorem.

Consider a family of multi-indices $\c{A} \subseteq \N \x \Z_+^2$.
Its elements are pairs $(j, k)$, where $j \in \N, k \in \Z_+^2$.
Let $\delta_k$ be as in equation \eqref{eqn:delta_k} and consider a family $\cbr{a_{j, k}}_{(j, k) \in \c{A}} \subseteq \Z_+^2$ such that $\cbr{a_{j, k} \dot{+} \delta_k}_{(j, k) \in \c{A}}$ consists of pairwise nonintersecting subsets of $\Z_+^2$.
We define the operator $G$ induced by the family $\cbr{a_{j, k}}_{(j, k) \in \c{A}}$ and prove its boundedness in the following lemma.

\begin{lemma} \label{th:G_is_bounded}
The operator $G$ maps any vector-valued function $h = \cbr{h_{j, k}}_{(j, k) \in \N \x \Z_+^2}$ from $L^p(l^2_{\N \x \Z_+^2})$, $1 < p \leq 2$, to a measurable function by the following law
\[
(G h)(x_1, x_2) := \sum\limits_{(j, k) \in \c{A}} w_{a_{j, k}}(x_1, x_2) \del{\Delta_{k} h_{j, k}}(x_1, x_2).
\]
This operator is bounded between $L^p(l^2_{\N \x \Z_+^2})$ and $L^p$, that is
\[
\norm{G h}_{L^p} \leq C_p \norm{h}_{L^p(l^2_{\N \x \Z_+^2})},
\]
where the constant $C_p$ depends only on $p$.

\end{lemma}
\begin{proof}
Since for $1 < p \leq 2$ there exists a one-to-one mapping between the elements of $L^p$ and the martingales from $L^p$, the operator $G$ may be viewed as an operator mapping $l^2(\N \x \Z_+^2)$-valued martingales (rather than $l^2(\N \x \Z_+^2)$-valued functions) to measurable functions.
We will prove that $G$ fulfills the conditions of Theorem \ref{th:gundy2d}, or rather, strictly speaking, generalization to the case of $l^2(\N \x \Z_+^2)$-valued martingales.
Here we rely on the fact that everything can be transferred to the $l^2$-valued case, as mentioned at the beginning of Section~\ref{sec:preliminaries}.

$G$ is linear, and thus of course it is sublinear.
Further, Plancherel's identity and the fact that $\cbr{a_{j, k} \dot{+} \delta_k}_{(j, k) \in \c{A}}$ is a family of pairwise nonintersecting sets give the boundedness of $G$ between $L^2$ and $L^2$.

Finally, we claim that for a simple $l^2(\N \x \Z_+^2)$-valued martingale for which $u_{0, 0} = 0$ and $\Delta_n u = \1_{e_n} \Delta_n u$, where $e_n \in \c{F}_m$, $m \leq n, m \not= n$, it we have $\cbr{\abs{G u} > 0} \subseteq \bigcup_{n \in \Z_+^2 \setminus \cbr{0}} e_n$.
Indeed, write
\<
\cbr{\abs{G u} > 0}
\subseteq
\bigcup\limits_{(j, k) \in \c{A}}
    \cbr{
        \abs{
            w_{a_{j, k}}
            \Delta_{k} u_{j, k}
        }
        >
        0
    }
=
\bigcup\limits_{(j, k) \in \c{A}}
    \cbr{
        \abs{
            \Delta_{k} u_{j, k}
        }
        >
        0
    }
\\
=
\bigcup\limits_{(j, k) \in \c{A}}
    \cbr{
        \abs{
            \1_{e_k}
            \Delta_{k} u_{j, k}
        }
        >
        0
    }
\subseteq
\bigcup\limits_{(j, k) \in \c{A}}
    \cbr{
        \abs{
            \1_{e_k}
        }
        >
        0
    }
\subseteq
\bigcup\limits_{k \in \Z_+^2 \setminus \cbr{0}} e_k
.
\>
This proves the claim.
\end{proof}

\section{Proof of the main theorem}

Finally, here we use the theory established in the preceding sections to prove the main theorem.
We begin with recalling its formulation.
\walshrdftwodim*

\begin{proof}
As in \cite{osipov2017}, we partition the rectangles $I_k$ into fragments that behave well under shifts induced by the operation $\dot{+}$.
This, together with Lemma \ref{th:G_is_bounded} and the classical assertion about the boundedness of the Littlewood--Paley square function, will allow us to prove the claim.
Let
\[
I_k= I_k^1 \x I_k^2 = [a_k^{(1)}, b_k^{(1)}-1] \x [a_k^{(2)}, b_k^{(2)}-1]
.
\]

We build the partition of $I_k$ by forming the direct product of partitions of intervals $I_k^1$ and $I_k^2$, while partitioning these individual intervals in the same way as it was done by Osipov in \cite{osipov2017}.

Let us recall that in \cite{osipov2017} an interval $I = [a, b-1] \subseteq \Z_+$ was partitioned into
\[
I
=
\cbr{a}
\u
\del[3]{\bigcup\limits_{j=1}^{r} J_j}
\u
\del[3]{\bigcup\limits_{j=1}^{s} \tilde{J_j}}
=
\del[3]{\bigcup\limits_{j=0}^{r} J_j}
\u
\del[3]{\bigcup\limits_{j=1}^{s} \tilde{J_j}}
,
\]
where $r, s \in \Z_+$ are some numbers, $J_0 = \cbr{a}$, and $J_j$, $\tilde{J_j}$ are pairwise nonintersecting sets.
Moreover, for $j > 0$ we have
$
\abs{J_j} = 2^{\kappa_j}
,
\abs[0]{\tilde{J_j}} = 2^{\gamma_j},
$
where $\kappa_j$ is a strictly increasing and $\gamma_j$ is a strictly decreasing $\Z_+$-valued sequence.

The most important property of the intervals $J_j$ and $\tilde{J_j}$ is that they can be shifted to become the dyadic intervals
\[
a \dot{+} J_0 = \cbr{0}
, \quad
a \dot{+} J_j = [2^{\kappa_j}, 2^{\kappa_j+1}-1]
, \quad
b \dot{+} \tilde{J_j} = [2^{\gamma_j}, 2^{\gamma_j+1}-1]
,
\]
hence the following holds
\<
\label{eqn:1d_decomposition_prop_1}
\Delta_{\kappa_j + 1} w_a f &= w_a f, ~~~\t{if}~ \spec f \subseteq J_j,
\\
\label{eqn:1d_decomposition_prop_2}
\Delta_{\gamma_j + 1} w_b f &= w_b f, ~~~\t{if}~ \spec f \subseteq \tilde{J_j}.
\>

To partition a rectangle $I = I^1 \times I^2 = [a^{(1)}, b^{(1)}-1] \x [a^{(2)}, b^{(2)}-1] \subseteq \Z_+^2$, we partition each interval $I^i$ as above and consider all direct products, yielding
\[
I
=
\del[3]{
    \bigcup\limits_{j}
    A_j
}
\cup
\del[3]{
    \bigcup\limits_{j}
    B_j
}
\cup
\del[3]{
    \bigcup\limits_{j}
    C_j
}
\cup
\del[3]{
    \bigcup\limits_{j}
    D_j
}
,
\]
where
\<
A_j
&=
J_{j^{\del{1}}}^{\del{1}}
\x
J_{j^{\del{2}}}^{\del{2}}
,
\qquad
B_j
=
\tilde{J}_{j^{\del{1}}}^{\del{1}}
\x
\tilde{J}_{j^{\del{2}}}^{\del{2}}
,
\\
C_j
&=
\tilde{J}_{j^{\del{1}}}^{\del{1}}
\x
J_{j^{\del{2}}}^{\del{2}}
,
\qquad
D_j
=
J_{j^{\del{1}}}^{\del{1}}
\x
\tilde{J}_{j^{\del{2}}}^{\del{2}}
,
\>
where a superscript indicates whether the object belongs to the partition of~$I^1$~or~$I^2$.

This partition of $I$ possesses properties similar to those in \eqref{eqn:1d_decomposition_prop_1}, \eqref{eqn:1d_decomposition_prop_2}.
Define $a, b, c, d$ to be the vertices of the rectangle $I$, that is
\[
a := \del[0]{a^{\del{1}}, a^{\del{2}}}
,~~
b := \del[0]{b^{\del{1}}, b^{\del{2}}}
,~~
c := \del[0]{b^{\del{1}}, a^{\del{2}}}
,~~
d := \del[0]{a^{\del{1}}, b^{\del{2}}}
,
\]
then
\<
\label{eqn:spec_prop_1}
\Delta_{\kappa_{j_1}^{\del{1}} + 1, \kappa_{j_2}^{\del{2}} + 1}
w_{a}
f
&=
w_{a}
f
, ~~~\t{if}~ \spec{f} \subseteq A_j,
\\
\label{eqn:spec_prop_2}
\Delta_{\gamma_{j_1}^{\del{1}} + 1, \gamma_{j_2}^{\del{2}} + 1}
w_{b}
f
&=
w_{b}
f
, ~~~\t{if}~ \spec{f} \subseteq B_j,
\\
\label{eqn:spec_prop_3}
\Delta_{\gamma_{j_1}^{\del{1}} + 1, \kappa_{j_2}^{\del{2}} + 1}
w_{c}
f
&=
w_{c}
f
, ~~~\t{if}~ \spec{f} \subseteq C_j,
\\
\label{eqn:spec_prop_4}
\Delta_{\kappa_{j_1}^{\del{1}} + 1, \gamma_{j_2}^{\del{2}} + 1}
w_{d}
f
&=
w_{d}
f
, ~~~\t{if}~ \spec{f} \subseteq D_j.
\>
This behavior under shifts will be of utter importance in what follows.

Let us similarly partition each $I_k$, adding yet another index $k$ to all objects that arise from this construction.
Then $f_k$ can be represented as the sum
\[
f_k
=
\sum\limits_{j} f_{k, j}^A
+
\sum\limits_{j} f_{k, j}^B
+
\sum\limits_{j} f_{k, j}^C
+
\sum\limits_{j} f_{k, j}^D
,
\]
where $\spec f_{k, j}^A \subseteq A_{k, j}$, $\spec f_{k, j}^B \subseteq B_{k, j}$, $\spec f_{k, j}^C \subseteq C_{k, j}$, $\spec f_{k, j}^D \subseteq D_{k, j}$.

Define
\<
g_{k, j}^A
=
w_{a_k}
f_{k, j}^A
,~~~
g_{k, j}^B
=
w_{b_k}
f_{k, j}^B
,~~~
g_{k, j}^C
=
w_{c_k}
f_{k, j}^C
,~~~
g_{k, j}^D
=
w_{d_k}
f_{k, j}^D
.
\>
Then $\sum\limits_{k} f_k$ can be represented as follows:
\<
\sum\limits_{k}
\Bigg(
w_{a_k}
\sum\limits_{j}
g_{k, j}^A
+
w_{b_k}
\sum\limits_{j}
g_{k, j}^B
+
w_{c_k}
\sum\limits_{j}
g_{k, j}^C
+
w_{d_k}
\sum\limits_{j}
g_{k, j}^D
\Bigg).
\>
Application of Lemma \ref{th:G_is_bounded} to this expression (justified by none other than properties \eqref{eqn:spec_prop_1}--\eqref{eqn:spec_prop_4}), followed by applying the triangle inequality, gives us\footnote{Note that $k$ and $j$ here correspond, respectively, to $j$ and $k$ in the formulation of Lemma~\ref{th:G_is_bounded}.}
\<
\norm{\sum\limits_{k} f_k}_{L^p}
\lesssim
\Bigg\lVert
    \Bigg(
        \sum\limits_{k}
        \Bigg(
            \sum\limits_{j}
            \abs{
                g_{k, j}^A
            }^2
            +
            \sum\limits_{j}
            \abs{
                g_{k, j}^B
            }^2
            +
            \sum\limits_{j}
            \abs{
                g_{k, j}^C
            }^2
            \\
            +
            \sum\limits_{j}
            \abs{
                g_{k, j}^D
            }^2
        \Bigg)
    \Bigg)^{1/2}
\Bigg\rVert_{L^p}
\\
\label{eqn:after_G_1}
\leq
\Bigg\lVert
    \Bigg(
        \sum\limits_{k}
            \sum\limits_{j}
            \abs{
                g_{k, j}^A
            }^2
    \Bigg)^{1/2}
\Bigg\rVert_{L^p}
+
\Bigg\lVert
    \Bigg(
        \sum\limits_{k}
            \sum\limits_{j}
            \abs{
                g_{k, j}^B
            }^2
    \Bigg)^{1/2}
\Bigg\rVert_{L^p}
~~~
\\
\label{eqn:after_G_2}+
\Bigg\lVert
    \Bigg(
        \sum\limits_{k}
            \sum\limits_{j}
            \abs{
                g_{k, j}^C
            }^2
    \Bigg)^{1/2}
\Bigg\rVert_{L^p}
+
\Bigg\lVert
    \Bigg(
        \sum\limits_{k}
            \sum\limits_{j}
            \abs{
                g_{k, j}^D
            }^2
    \Bigg)^{1/2}
\Bigg\rVert_{L^p}
.
~~
\>
Hereinafter the symbol $\lesssim$ denotes the inequality up to some implicit multiplicative constant.
Consider now separately, e.g., the third term.
Write
\[
w_{c_k} f_k
=
w_{c_k \dot{+} a_k}
\sum\limits_{j}
g_{k, j}^A
+
w_{c_k \dot{+} b_k}
\sum\limits_{j}
g_{k, j}^B
+
\sum\limits_{j}
g_{k, j}^C
+
w_{c_k \dot{+} d_k}
\sum\limits_{j}
g_{k, j}^D.
\]
We note that $\Delta_{\gamma_{k, j_1}^{\del{1}} + 1, \kappa_{k, j_2}^{\del{2}} + 1} w_{c_k} f_k = g_{k, j}^C$, hence in the decomposition $w_{c_k} f_k = \sum_{n \in \Z_+^2} \Delta_n w_{c_k} f_k$, the functions $g_{k, j}^C$ are among the right-hand side terms.
It follows that
\[
\sum\limits_{j}
\abs{
    g_{k, j}^C
}^2
\leq
\sum\limits_{n \in \Z_+^2}
\abs{
    \Delta_n w_{c_k} f_k
}^2
=
\del[2]{S(w_{c_k} f_k)}^2
,
\]
where $S$ is the Littlewood--Paley square function.
By leveraging its boundedness (cf. the papers \cite{brossard1980,brossard1981,metraux1978}, also the book \cite{weisz2006}, where the scalar-valued version of this statement was proved, from which the vector-valued version follows easily), we have
\<
\Bigg\lVert
    \Bigg(
        \sum\limits_{k}
            \sum\limits_{j}
            \abs{
                g_{k, j}^C
            }^2
    \Bigg)^{1/2}
\Bigg\rVert_{L^p}
\leq
\Bigg\lVert
    \Bigg(
        \sum\limits_{k}
            \sum\limits_{n \in \Z_+^2}
            \abs{
                \Delta_n w_{c_k} f_k
            }^2
    \Bigg)^{1/2}
\Bigg\rVert_{L^p}
\\
\lesssim
\Bigg\lVert
    \Bigg(
        \sum\limits_{k}
        \abs{
            w_{c_k} f_k
        }^2
    \Bigg)^{1/2}
\Bigg\rVert_{L^p}
=
\Bigg\lVert
    \Bigg(
        \sum\limits_{k}
        \abs{
            f_k
        }^2
    \Bigg)^{1/2}
\Bigg\rVert_{L^p}
.
\>
Similarly, we can bound each of the four terms in \eqref{eqn:after_G_1} and \eqref{eqn:after_G_2}.
Collecting these inequalities, we finally obtain
\[
\norm{\sum\limits_{k} f_k}_{L^p}
\lesssim
\Bigg\lVert
    \Bigg(
        \sum\limits_{k}
        \abs{
            f_k
        }^2
    \Bigg)^{1/2}
\Bigg\rVert_{L^p}
,
\]
which proves the claim.
\end{proof}

\begin{remark}
In the light of the papers \cite{osipov2010a} and \cite{osipov2010b}, it is natural to ask whether a similar statement holds for a general multi-parameter Walsh system and a partition of $\Z_+^D$ into arbitrary products of intervals.
The author is going to address this question in the near future.
For now we only mention that there is no direct analog of Theorem \ref{th:atomic_bounded} in the general multi parameter case and a finer statement would be required.
\end{remark}

\printbibliography

\end{document}